\renewcommand{\baselinestretch}{1.16}
\renewcommand{\geq}{\geqslant}
\renewcommand{\leq}{\leqslant}
\theoremstyle{plain}
\newtheorem{theorem}{Theorem}
\newtheorem{lemma}[theorem]{Lemma}
\newtheorem{corollary}[theorem]{Corollary}
\newtheorem{proposition}[theorem]{Proposition}
\newtheorem{observation}[theorem]{Observation}
\theoremstyle{definition}
\newtheorem{conjecture}[theorem]{Conjecture}
\DeclareMathOperator{\CR}{cr}
\newcommand{\eps}{\epsilon}  
\newcommand{\floor}[1]{\ensuremath{\protect\lfloor{#1}\rfloor}}
\newcommand{\mededges}{E_M} 
\newcommand{\smallpairs}{P_S} 
\newcommand{\medpairs}{P_M} 
\newcommand{\largepairs}{P_L} 
\newcommand{\smallinc}{I_S} 
\newcommand{\medinc}{I_M} 
\newcommand{\smalllines}{L_S} 
\begin{document}

\title{Progress on Dirac's Conjecture}

\author[]{Michael~S.~Payne}
\address{
\newline Department of Mathematics and Statistics 
\newline The University of Melbourne
\newline Melbourne, Australia}
\email{m.payne3@pgrad.unimelb.edu.au}

\author[]{David~R.~Wood}
\address{
\newline School of Mathematical Sciences
\newline Monash University
\newline Melbourne, Australia}
\email{david.wood@monash.edu}

\thanks{Michael Payne is supported by an Australian Postgraduate Award from the Australian Government. 
Research of David Wood is supported by the Australian Research Council.}

\subjclass[2000]{}

\date{\today}

\begin{abstract}In 1951, Gabriel Dirac conjectured that every set $P$
  of $n$ non-collinear points in the plane contains a point in at
  least $\frac{n}{2}-c$ lines determined by $P$, for some constant
  $c$. The following weakening was proved by Beck and
  Szemer\'edi--Trotter: every set $P$ of $n$ non-collinear points contains a
  point in at least $\frac{n}{c}$ lines determined by $P$, for some
  large unspecified constant $c$. We prove that every set $P$ of $n$
  non-collinear points contains a point in at least $\frac{n}{37}$
  lines determined by $P$. 
We also give the best known constant for Beck's Theorem, proving that every set of $n$ points with at most $\ell$ collinear determines at least $\frac{1}{98} n(n-\ell)$ lines.
\end{abstract}

\maketitle

\section{Introduction}

Let $P$ be a finite set of points in the plane. A line that contains
at least two points in $P$ is said to be \emph{determined} by $P$. In
1951, \citet{Dirac51} 
made the following conjecture, which remains
unresolved:

\begin{conjecture}[Dirac's Conjecture]
  Every set $P$ of $n$ non-collinear points contains a point in at
  least $\frac{n}{2}-c_1$ lines determined by $P$, for some constant
  $c_1$.
\end{conjecture}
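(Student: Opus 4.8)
The plan is to locate a single distinguished point of large degree, since the conjectured bound concerns $\max_p d(p)$ rather than the average. Here $d(p)$ denotes the number of determined lines through $p$. It is essential to target the maximum: a near-pencil ($n-1$ collinear points together with one further point $q$) has average degree only about $3$, yet $q$ lies on $n-1$ lines, so any averaging argument is hopeless and the sought point may be a lone outlier. I would first record the reformulation obtained by charging, to each line $\ell$ through $p$, its \emph{excess} $|\ell \cap P| - 2$: the lines through $p$ partition $P \setminus \{p\}$, so
\[
 d(p) = (n-1) - \sum_{\ell \ni p} \bigl( |\ell \cap P| - 2 \bigr),
\]
the sum running over lines through $p$ with $|\ell \cap P| \geq 3$. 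Thus Dirac's bound is equivalent to exhibiting a point whose total excess is at most $\tfrac{n}{2} + O(1)$. An immediate first reduction: if some point lies on no rich line then its excess is $0$ and $d(p) = n-1$, so we may assume every point lies on at least one line with three or more points.

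Next I would argue by contradiction, assuming every point has excess exceeding $\tfrac{n}{2}$, that is $d(p) < \tfrac{n}{2} - c_1$ for all $p$. Then the lines through each point are, on average, rich, which forces an abundance of collinear triples globally. I would quantify this through the identity $\sum_\ell \binom{|\ell \cap P|}{2} = \binom{n}{2}$ together with the Szemer\'edi--Trotter bound on the number of rich lines, as in the weak-Dirac arguments underlying this paper; this already pushes the configuration towards heavy collinearity and a near-linear family of rich lines covering almost all pairs.

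The heart of the proof would then be a structure theorem: a configuration in which every point has degree below $\tfrac{n}{2} - c_1$ must be \emph{degenerate}, namely essentially contained in a bounded number of lines, or in a conic, or in a cubic curve. This is where I would hope to import and adapt the Green--Tao structure theory for point sets with few ordinary lines. Having reduced to these degenerate families, I would verify Dirac's bound directly in each case: for points spread over a few lines the correct distinguished point is an intersection of two rich lines, whose cross-connections to the remaining lines already yield degree $\tfrac{n}{2} - O(1)$; for points on a conic or cubic the algebraic (group-law) description of collinear triples limits how many of the $n-1$ other points a single line through $p$ can absorb, again forcing a point of degree $\tfrac{n}{2} - O(1)$.

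The main obstacle is plainly the interface between the two halves. Existing structure theorems are phrased in terms of \emph{ordinary} lines, whereas the hypothesis here controls the \emph{total} degree of every point, and translating one into the other loses exactly the kind of control the sharp constant demands. Equally delicate is the final case analysis: every incidence-based estimate --- including the $\tfrac{n}{37}$ of the present paper --- is lossy in its constant, so reaching the sharp value $\tfrac{1}{2}$ requires an argument that surrenders no constant factor while remaining uniform across all near-extremal degenerate configurations. It is precisely this combination of sharpness and robustness that has kept the full conjecture open, and I would expect the bulk of the work, and the genuine risk, to lie here.
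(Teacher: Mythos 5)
There is a genuine gap --- and it could not be otherwise, since this statement is Dirac's Conjecture itself, which the paper explicitly leaves unresolved: the paper proves only the weak version with constant $37$ (Theorem~\ref{Main}), so there is no proof of the statement in the paper to compare against, and your proposal is a research programme rather than a proof. Your preliminary steps are sound: the excess identity $d(p) = (n-1) - \sum_{\ell \ni p}(|\ell \cap P| - 2)$ is correct, as is the reduction to points lying on at least one $3$-rich line. But the pivotal step --- that a configuration in which every point has degree below $\frac{n}{2}-c_1$ must be degenerate in the Green--Tao sense --- has no available proof, and the translation problem you flag is fatal, not merely delicate. The Green--Tao structure theory has as its hypothesis that the number of \emph{ordinary} ($2$-point) lines is $O(n)$; your hypothesis, summed over all points via the excess identity, yields only $I \leq n(n-1) - \frac{n^2}{2} + O(n)$, i.e.\ an upper bound of roughly $\frac{n^2}{2}$ on total incidences. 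That is compatible with as many as $\Theta(n^2)$ ordinary lines (note $2L \leq \sum_p d(p) < \frac{n^2}{2}$ gives only $L < \frac{n^2}{4}$), so no known structure theorem is triggered, and no argument is known that converts a pointwise degree bound into a global bound on ordinary lines. This is precisely why the conjecture is open.

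The "easy" half of your plan also fails as stated. For a set distributed over a bounded number of lines, your claim that an intersection point of two rich lines has degree $\frac{n}{2}-O(1)$ is wrong in general: if $P$ lies on three lines with $\frac{n}{3}$ points each, then any line through a point $p \in \ell_1$ meets each of $\ell_2$ and $\ell_3$ in at most one point, so the naive count gives only $d(p) \geq \frac{n}{3} + O(1)$; ruling out configurations where points of $\ell_2$ and $\ell_3$ pair up collinearly through every $p$ requires exactly the sharp, loss-free analysis you defer to the end. (The tightness examples cited in the paper via reference~\citep{AIKN-GC11} show the bound $\frac{n}{2}$ cannot be improved, so any slack anywhere collapses the argument.) In short: the identity and the contradiction framework are fine as far as they go, but the two load-bearing components --- the structure theorem under a degree hypothesis and the uniform sharp verification in the degenerate cases --- are both missing, and the first is a recognized open obstruction rather than a technical detail.
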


See reference \citep{AIKN-GC11} for examples showing that the
$\frac{n}{2}$ bound would be tight. Note that if $P$ is non-collinear and contains at least $\frac{n}{2}$ collinear points, then Dirac's Conjecture holds. Thus we may assume that $P$ contains at most $\frac{n}{2}$ collinear points, and $n\geq 5$.
In 1961, \citet{Erdos61} proposed
the following weakened conjecture.

\begin{conjecture}[Weak Dirac Conjecture]
  Every set $P$ of $n$ non-collinear points contains a point in at
  least $\frac{n}{c_2}$ lines determined by $P$, for some constant $c_2$.
\end{conjecture}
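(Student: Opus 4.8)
The plan is to obtain a single high-degree point from a global lower bound on the number of determined lines, via one averaging step. For a point $p\in P$ write $\deg(p)$ for the number of determined lines through $p$, and for a determined line $\ell$ write $t_\ell=|\ell\cap P|$. The key observation is that $\sum_{p}\deg(p)$ and $\sum_{\ell}t_\ell$ both count point--line incidences, so they are equal; call this common value $I$. Since every determined line carries at least two points of $P$, we have $I=\sum_{\ell}t_\ell\ge 2m$, where $m$ is the number of determined lines. Averaging over the $n$ points then yields a point $p$ with $\deg(p)\ge I/n\ge 2m/n$. Hence it suffices to prove the quadratic line bound $m=\Omega(n^2)$.

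As already noted, we may assume that at most $\frac{n}{2}$ points of $P$ are collinear. Under this hypothesis I would invoke Beck's Theorem in the form: a set of $n$ points with at most $\ell$ of them collinear determines $\Omega\!\left(n(n-\ell)\right)$ lines. Taking $\ell\le\frac{n}{2}$ gives $n-\ell\ge\frac{n}{2}$ and therefore $m=\Omega(n^2)$. Combined with the averaging bound this produces a point on $\Omega(n)$ lines, which is exactly the Weak Dirac Conjecture. Moreover, feeding in the explicit form $m\ge\frac{1}{98}n(n-\ell)$ quoted in the abstract gives $m\ge\frac{n^2}{196}$ and hence $\deg(p)\ge\frac{n}{98}$, so this route already yields a concrete constant (the paper's sharper $\frac{n}{37}$ presumably comes from replacing the crude $I\ge 2m$ averaging by a more careful local analysis).

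The main obstacle is the line bound $m=\Omega(n^2)$, i.e.\ Beck's Theorem itself; the averaging above is elementary and loses only constant factors. I would establish it through the Szemer\'edi--Trotter incidence theorem $I=O(m^{2/3}n^{2/3}+m+n)$, whose standard corollary bounds the number of \emph{rich} lines: the number of lines carrying at least $k$ points of $P$ is $O(n^2/k^3+n/k)$. Partitioning the determined lines into dyadic multiplicity classes $t_\ell\in[2^{j},2^{j+1})$ and summing $\sum_{\ell}\binom{t_\ell}{2}=\binom{n}{2}$ over these classes, one shows that the lines carrying more than some constant number $k_0$ of points together absorb at most half of the $\binom{n}{2}$ point-pairs, the delicate point being to keep this true while allowing the collinearity threshold to be as large as $\frac{n}{2}$. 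The remaining half of the pairs must then lie on lines with $O(1)$ points, and since each such line absorbs only $O(1)$ pairs there are $\Omega(n^2)$ of them. Balancing these dyadic estimates against the collinearity threshold is precisely where the constant is won or lost, and is where I expect the paper's refinements, yielding $\frac{1}{98}$ for Beck's Theorem and $\frac{1}{37}$ for Dirac's point, to enter.
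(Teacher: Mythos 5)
Your argument is sound and does prove the Weak Dirac Conjecture, but it runs in the opposite direction to the paper. You reduce the problem to Beck's Theorem: from $I=\sum_p\deg(p)=\sum_\ell t_\ell\geq 2m$ and averaging you get a point on at least $\frac{2m}{n}$ lines, and then you need $m=\Omega(n^2)$, which you import from Beck (proved classically via Szemer\'edi--Trotter and dyadic multiplicity classes). The paper never passes through the number of lines $m$ at all: it lower-bounds the incidence count $I$ directly by writing $\binom{n}{2}=\sum_\ell\binom{t_\ell}{2}$ and bounding the pairs on small lines via Hirzebruch's Inequality, on medium lines via a crossing-lemma form of Szemer\'edi--Trotter applied to visibility graphs, and on large lines via the collinearity hypothesis, each bound expressed in terms of incidences. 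It then averages $I/n$ and, separately, \emph{derives} the quantitative Beck Theorem ($c_3\geq\frac{1}{98}$) from this incidence bound together with the Kelly--Moser inequality $3L\geq 3+I$. Two consequences of the difference: first, your step $I\geq 2m$ is very lossy (a line with $i$ points contributes $i$ to $I$ but only $2$ to your bound), which is partly why the direct incidence count reaches $\frac{n}{37}$ while your route gives at best $\frac{n}{98}$ even using the paper's own Beck constant --- and note that using that constant would be circular here, since the paper obtains it \emph{from} the incidence bound; with the classical Beck Theorem your constant is large and unspecified, which still suffices for the conjecture as stated. Second, the ``delicate point'' you flag in the dyadic argument --- controlling the pairs absorbed by rich lines when as many as $\frac{n}{2}$ points may be collinear --- is real and is left unresolved in your sketch (two disjoint lines of $\frac{n}{2}$ points already absorb about half of $\binom{n}{2}$); the paper sidesteps it entirely by treating all lines with at least $k$ points in one stroke via the bound $\sum_{j\geq k}\binom{j}{2}s_j\leq\frac{\eps n}{2}|E(G_k)|\leq\frac{\eps\alpha n^2}{2}$ on its ``large'' class.
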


In 1983, the Weak Dirac Conjecture was proved indepedently by
\citet{Beck-Comb83} and \citet{SzemTrot-Comb83}, in both cases with $c_2$
unspecified and very large. 
We prove the Weak Dirac Conjecture with
$c_2$ much smaller. (See references
\citep{ErdosPurdy95,ErdosPurdy78,KellyMoser58,Purdy81,LPS12} for more
on Dirac's Conjecture.)

\begin{theorem}
  \label{Main}
  Every set $P$ of $n$ non-collinear points contains a point in at
  least $\frac{n}{37}$ lines determined by $P$.
\end{theorem}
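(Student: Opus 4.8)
The plan is to argue by contradiction: suppose every point of $P$ lies on fewer than $\frac{n}{37}$ lines, and let $t$ denote the maximum number of lines through a single point, so $t<\frac{n}{37}$. The first and most useful reduction is that \emph{every} determined line then contains at most $t$ points: given a determined line $\ell$, non-collinearity provides a point $q\notin\ell$, and the lines joining $q$ to the points of $\ell$ are pairwise distinct (two of them coinciding would force $q\in\ell$), so $q$ lies on at least $|\ell\cap P|$ lines and hence $|\ell\cap P|\le t$. In particular there are no lines of size $\ge\frac{n}{2}$, consistent with the reduction noted in the introduction.

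Next I would relate the maximum degree $t$ to the total number of incidences. Writing $I=\sum_\ell|\ell\cap P|=\sum_{p}(\text{lines through }p)$, the degree bound gives $I\le nt$, so it suffices to force $I$ to be large: any lower bound $I\ge\frac{n^2}{37}$ immediately yields $t\ge I/n\ge\frac{n}{37}$, a contradiction. To lower bound $I$, I would show that a definite fraction of the $\binom{n}{2}$ point-pairs lie on \emph{short} lines. Classifying each determined line as short (at most $a$ points, for a constant $a$ to be optimised), medium ($a<|\ell\cap P|\le\sqrt{n}$), or long ($|\ell\cap P|>\sqrt{n}$), a short line of size $j$ contributes $j$ to $I$ but only $\binom{j}{2}$ pairs, and $j\ge\frac{2}{a-1}\binom{j}{2}$; summing gives $I\ge\frac{2}{a-1}\cdot(\text{pairs on short lines})$. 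Thus the whole argument reduces to bounding the number of pairs lying on medium and long lines.

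This bound is exactly where the Szemer\'edi--Trotter theorem enters, and it is the technical heart of the argument. Summing the contributions of medium lines using the estimate that the number of lines with at least $k$ points is $O(n^2/k^3)$ in this range shows the pairs on medium lines total $O(n^2/a)$, a small fraction of $\binom{n}{2}$ once $a$ is a large enough constant. For long lines the relevant part of the incidence bound is $O(n/k)$, and summing gives that the pairs on long lines number $O(nt)$; here the contradiction hypothesis $t<\frac{n}{37}$ is precisely what keeps this term below $\binom{n}{2}$. Subtracting both, a positive fraction of all pairs survive on short lines, which feeds back through $I\ge\frac{2}{a-1}\cdot(\text{short pairs})$ and $t\ge I/n$ to give $t\ge\frac{n}{37}$.

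The main obstacle is quantitative rather than structural: the long-line term $O(nt)$ is genuinely of order $n^2$---the near-pencil configuration shows one cannot hope to make it vanish---so it can only be absorbed using the very hypothesis one is contradicting, and the final constant $\frac{1}{37}$ is forced by how tightly the short/medium/long thresholds and the explicit constant in Szemer\'edi--Trotter can be balanced. The three-way split, rather than a lossy dyadic summation, is what removes constant-factor slack and lets the optimisation close at $37$; carrying the best available explicit incidence constant through this optimisation is the delicate part of the proof.
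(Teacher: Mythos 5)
Your skeleton is essentially the paper's: reduce to a lower bound on the total number of incidences $I$ (so that pigeonhole gives a point on $\ge I/n$ lines), then bound $\binom{n}{2}$ by splitting the pairs among short, medium and long lines, handling the middle range with an explicit-constant Szemer\'edi--Trotter bound derived from the crossing lemma and the top range using the bounded line size. Your opening reduction --- that if some point has at most $t$ lines through it for every point, then every determined line has at most $t$ points --- is correct and is exactly the paper's first case in contrapositive form. However, there is a genuine quantitative gap that prevents the argument as sketched from reaching the constant $37$. The conversion from short pairs to short incidences via $j\ge\frac{2}{a-1}\binom{j}{2}$ gives only $P_S\le\frac{a-1}{2}I_S$, and carrying $X=\frac{a-1}{2}$ through the optimisation (the final bound has the shape $I\ge\frac{1}{2X}\bigl(1-\text{medium loss}-\text{long loss}\bigr)n^2$, with the medium loss roughly $\frac{\beta}{2a}$ and $\beta\approx 31$) yields only about $I\ge n^2/75$, no matter how $a$ is chosen. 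The paper closes this gap with Hirzebruch's Inequality, $s_2+\frac{3}{4}s_3\ge n+\sum_{i\ge5}(2i-9)s_i$, which lets it discount the contribution of the larger short lines and replace $\frac{a-1}{2}$ by $X=\frac{h+1}{2}$ with $h=\frac{c(c-2)}{5c-18}\approx c/5$ --- a gain of a factor of about $4.5$ that is indispensable for the constant $37$. Your proposal contains no substitute for this step.

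A secondary, fixable loss: your long-line threshold $\sqrt{n}$ is not the right cutoff. With that choice the pairs on long lines are bounded by roughly $\frac{t}{2}\cdot\frac{\beta n}{2}\approx 7.8\,tn$, which at $t=n/37$ already consumes over $40\%$ of all $\binom{n}{2}$ pairs. The paper instead takes the cutoff $k$ to be the least $i$ with $|E(G_i)|\le\alpha n$ (pushing the Szemer\'edi--Trotter medium-range bound as far as it remains nontrivial), which caps the long pairs at $\frac{\eps\alpha n^2}{2}\approx 0.09\,n^2$. Both issues are about constants rather than structure, but since the entire content of the theorem is the explicit constant, they are the substance of the proof; as written your argument proves the Weak Dirac Conjecture with an explicit but substantially larger constant.
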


Theorem~\ref{Main} is a consequence of the following theorem. The points of $P$ together with the lines determined by $P$ are called the \emph{arrangement} 
 of $P$.

\begin{theorem}
  \label{NumEdges}
  For every set $P$ of $n$ points in the plane with at most
  $\frac{n}{37}$ collinear points, the arrangement of $P$ has at
  least $\tfrac{n^2}{37}$ point-line incidences.
\end{theorem}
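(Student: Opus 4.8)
The plan is to run a Beck-style counting argument driven by the Szemer\'edi--Trotter incidence bound, keeping the constants sharp throughout. Write $t_k$ for the number of lines determined by $P$ that contain exactly $k$ points, so that the quantity to be bounded below is the total incidence count $I=\sum_k k\,t_k$, while the fact that each pair of points lies on a unique determined line gives the exact identity $\binom{n}{2}=\sum_k\binom{k}{2}t_k$. The driving idea is that a line with $k$ points converts $\binom{k}{2}$ pairs into only $k$ incidences, an efficiency of $\tfrac{k}{\binom{k}{2}}=\tfrac{2}{k-1}$; short lines are efficient and long lines are wasteful, so I want to show that only a controlled fraction of the $\binom{n}{2}$ pairs can sit on long lines.

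First I would classify the determined lines into three ranges by their number of points $k$: small ($2\le k\le s$), medium ($s<k\le t$), and large ($t<k\le n/37$), where the hypothesis forbids $k>n/37$, and where $t$ is chosen near $\sqrt n$, the value at which the two terms of the rich-lines bound balance. The tool in both non-small ranges is the dual Szemer\'edi--Trotter estimate: the number of determined lines meeting $P$ in at least $k$ points is $O(n^2/k^3+n/k)$. Summing the contribution $\binom{k}{2}t_k$ against this bound (via Abel summation rather than a lossy dyadic split, to protect the constant) I would show that the pairs on medium lines total $O(n^2/s)$, made small by taking $s$ a large constant, while the pairs on large lines total $O(n\cdot(n/37))=O(n^2/37)$, where the collinearity hypothesis $k\le n/37$ is exactly what caps the large-line sum.

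Subtracting these two bounds from the identity $\binom{n}{2}=\sum_k\binom{k}{2}t_k$ shows that a definite majority of all pairs lie on small lines. Since every small line has at most $s$ points, its efficiency is at least $\tfrac{2}{s-1}$, so $I\ge I_S\ge\tfrac{2}{s-1}\cdot(\text{pairs on small lines})$, and the theorem follows once this exceeds $n^2/37$.

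The hard part will be the simultaneous optimization of the thresholds $s,t$ against the input constants. There is a genuine tension: a small $s$ makes small lines efficient but allows too many medium-line pairs, whereas a large $s$ suppresses the medium range at the cost of the final efficiency factor $\tfrac{2}{s-1}$; meanwhile the large-line contribution is irreducibly of order $n^2/37$ and must be kept comfortably below $\binom{n}{2}$. Hitting the specific constant $37$ therefore forces me to use the sharpest available incidence constants---most naturally by re-deriving the rich-lines bound through the crossing-number inequality $\CR(G)\ge |E|^3/(64|V|^2)$ applied to the graph $G$ whose edges join consecutive collinear points (here $|E|=I-m$ with $m$ the number of lines, and the crossings number at most $\binom{m}{2}$), rather than invoking Szemer\'edi--Trotter as a black box---and to balance $s,t$ so that the accumulated losses still leave the clean bound $I\ge n^2/37$.
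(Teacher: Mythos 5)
Your overall architecture is the same as the paper's: split the determined lines into small, medium and large ranges, bound the pairs on medium lines via a crossing-lemma-based Szemer\'edi--Trotter estimate, cap the pairs on large lines using the collinearity hypothesis, and convert the remaining pairs on small lines into incidences via the efficiency $\frac{2}{k-1}$. The problem is quantitative, and it is fatal for the stated constant: with only the trivial efficiency bound $I\geq I_S\geq\frac{2}{s-1}P_S$ (where $P_S$ is the number of pairs on small lines), the constant $37$ is out of reach no matter how you tune $s$ and $t$. To see this, note that even if \emph{every} pair lay on a small line you would need $s\leq 38$ to get $I\geq\frac{2}{s-1}\binom{n}{2}\geq\frac{n^2}{37}$. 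But with $s\approx 38$ and the sharpest published crossing constants ($\alpha=\frac{103}{16}$, $\beta=\frac{31827}{1024}$), the medium-line pairs cost about $\frac{\beta}{4}\sum_{i\geq s}\frac{i+1}{i^3}\approx 0.21\,n^2$ and the large-line pairs about $\frac{\eps\alpha}{2}n^2\approx 0.09\,n^2$ out of the total $\approx 0.5\,n^2$, leaving $P_S\lesssim 0.2\,n^2$ and hence only $I\gtrsim\frac{n^2}{90}$. Optimizing $s$ freely, the trade-off you describe bottoms out around $\frac{n^2}{73}$; with the constant $\CR(G)\geq\frac{|E|^3}{64|V|^2}$ that you propose to use it is worse still, around $\frac{n^2}{130}$.

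The missing ingredient is a structural inequality on the line-size distribution that makes the small range much more efficient than the worst case $\frac{2}{s-1}$. The paper uses Hirzebruch's Inequality, $s_2+\frac{3}{4}s_3\geq n+\sum_{i\geq 5}(2i-9)s_i$, adding $h$ times this inequality to the small-pair count to obtain $P_S\leq \frac{h+1}{2}I_S-hn$ with $h=\frac{c(c-2)}{5c-18}\approx\frac{c}{5}$. This replaces the efficiency factor $\frac{2}{c-1}\approx\frac{2}{c}$ by $\frac{1}{h+1}\approx\frac{5}{c}$, exploiting the fact that a line-size distribution heavy on $c$-point lines is impossible: such a configuration would force many $2$- and $3$-point lines, which are cheap. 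That gain of a factor of roughly $2.5$ in the dominant term is exactly what brings the constant from the low seventies down to $37$ (the paper's optimization with $c=71$ gives $\delta\geq\frac{1}{36.158}$). Without Hirzebruch's Inequality (or at least Melchior's Inequality $s_2\geq 3+\sum_{i\geq 4}(i-3)s_i$, which gives a weaker but similar effect), your argument proves a correct theorem of the same shape but with a constant near $73$ rather than $37$, so it does not establish the statement as given.
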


\begin{proof}[Proof of Theorem~\ref{Main} assuming Theorem~\ref{NumEdges}] 
  Let $P$ be a set of $n$ non-collinear points in the plane.  If $P$
  contains at least $\frac{n}{37}$ collinear points, then every other
  point is in at least $\frac{n}{37}$ lines determined by $P$ (one
  through each of the collinear points).
  Otherwise, by Theorem~\ref{NumEdges}, the arrangement of
  $P$ has at least $\tfrac{n^2}{37}$ incidences, and so some point is incident with at least $\frac{n}{37}$ lines determined by $P$.
\end{proof}

In his work on the Weak Dirac Conjecture, Beck proved the following theorem~\cite{Beck-Comb83}.

\begin{theorem}[Beck's Theorem]\label{beckthm}
Every set $P$ of $n$ points with at most $\ell$ collinear determines at least $c_3 n(n-\ell)$ lines, for some constant $c_3$.
\end{theorem}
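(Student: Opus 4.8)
The plan is to deduce Beck's Theorem from the Szemer\'edi--Trotter incidence bound in its standard form: there is an absolute constant $C$ such that for any $n$ points the number of lines containing at least $k$ of them is at most $C(n^2/k^3 + n/k)$. Throughout I write $b$ for the maximum number of collinear points in $P$, so that $b \le \ell$; since $n-b \ge n-\ell$ it suffices to prove the stronger statement that $P$ determines at least $c_3\, n(n-b)$ lines. I fix a large constant $C_0$ and call a line \emph{rich} if it contains more than $C_0$ points and \emph{poor} otherwise. The basic counting principle is that the number of lines is at least $1/\binom{C_0}{2}$ times the number of point-pairs lying on poor lines, because each poor line carries at most $\binom{C_0}{2}$ pairs.

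First I would treat the \emph{dilute} case $b \le \delta n$, for a small constant $\delta$ to be fixed. A dyadic summation of the Szemer\'edi--Trotter bound over lines grouped by richness shows that the number of pairs on rich lines is at most $C'(n^2/C_0 + nb)$ for an absolute $C'$. Taking $\delta$ and $1/C_0$ small compared with $1/C'$, this is at most $\tfrac12\binom{n}{2}$, so at least half of all $\binom{n}{2}$ pairs lie on poor lines, yielding $\Omega(n^2)\ge\Omega(n(n-b))$ lines; here $n-b\ge(1-\delta)n$, so this already settles the theorem when $b$ is small.

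The remaining \emph{concentrated} case $b \ge \delta n$ is where the factor $(n-b)$ must be produced honestly, and this is the crux. Let $M$ be a line with $b$ points and set $Q = P\setminus M$, so $|Q| = n-b =: m$. I would count only pairs that meet $Q$; there are exactly $\tfrac12 m(2n-m-1) \ge \tfrac12 mn$ of them. The key observation is that any rich line $L\neq M$ meets $M$ in at most one point (two common points would force $L=M$), so \emph{all} of its pairs already meet $Q$ and moreover $\binom{|L|}{2} \le \tfrac{C_0+1}{C_0-1}\binom{|L\cap Q|}{2}$, a factor tending to $1$ as $C_0\to\infty$. Since the $Q$-point-pairs lying on distinct lines are disjoint, summing over rich $L\neq M$ gives that the number of pairs on rich lines that meet $Q$ is at most $\tfrac{C_0+1}{C_0-1}\binom{m}{2}$ --- an $O(m^2)$ bound, crucially \emph{not} the wasteful $O(nb)$ produced by the crude incidence estimate. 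Subtracting, the number of poor pairs meeting $Q$ is at least $\tfrac12 m(2n-m-1) - \tfrac{C_0+1}{C_0-1}\binom{m}{2}$, which for $C_0$ large and $b \ge \delta n$ is $\Omega(mn) = \Omega(n(n-b))$; dividing by $\binom{C_0}{2}$ finishes this case.

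The two cases meet at the common threshold $b=\delta n$, so together they show $P$ determines $\Omega(n(n-b)) \ge \Omega(n(n-\ell))$ lines. The main obstacle is the concentrated case: the naive incidence bound on pairs through rich lines loses a factor of $n$, and the remedy is to restrict to pairs meeting $Q$ and to charge each rich line to its many collinear $Q$-points, whose pairs are globally capped by $\binom{m}{2}$. Converting these $\Omega(\cdot)$ estimates into the explicit constant $c_3=\tfrac{1}{98}$ claimed in the abstract is then a matter of using an explicit Szemer\'edi--Trotter constant and optimizing the choices of $C_0$ and $\delta$; I expect this bookkeeping, rather than any new idea, to be the most laborious part.
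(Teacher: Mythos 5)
Your argument is correct, but it reaches Beck's Theorem by the classical route rather than the one used in this paper. You invoke the standard Szemer\'edi--Trotter bound on $k$-rich lines, handle the dilute case $b\le\delta n$ by a dyadic summation showing most pairs lie on poor lines, and handle the concentrated case by a purely combinatorial count of pairs meeting $Q=P\setminus M$: since every rich line other than $M$ meets $M$ in at most one point, its pairs are charged (up to the factor $\tfrac{C_0+1}{C_0-1}$) to disjoint sets of $Q$-pairs, which are globally capped by $\binom{m}{2}$. That is exactly the right way to produce the factor $n-b\ge n-\ell$ honestly, and the bookkeeping you sketch (the thresholds need $C_0\delta$ bounded away from $1$ and $C'(1/C_0+\delta)$ small) does close. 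The paper proceeds differently: it deduces Theorem~\ref{beckthmc} from the incidence lower bound of Theorem~\ref{NumEdgesGeneral} (itself proved via the Crossing Lemma, a strengthened Szemer\'edi--Trotter inequality for visibility graphs, and Hirzebruch's Inequality), combined with Lemma~\ref{Extra} (a line with $\ell$ points forces $\ell(n-\ell)$ visibility edges) and the Kelly--Moser consequence of Melchior's Inequality, $3L\ge 3+I$ and $2L\ge 3+E$, to convert incidence and edge counts into line counts. The case split on the size of the largest collinear subset is the same in spirit, but the payoffs differ: your proof is more elementary and self-contained given Szemer\'edi--Trotter, and delivers precisely the qualitative statement with ``some constant $c_3$''; the paper's heavier machinery is what makes the explicit constant $\tfrac{1}{98}$ attainable, whereas your dyadic summation and the division by $\binom{C_0}{2}$ are lossy enough that matching that constant along your route would be genuinely hard.
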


In Section~\ref{acfbt} we use the proof of Theorem~\ref{NumEdges} and some simple lemmas to show that $c_3 \geq \frac{1}{98}$. Similar methods and a bit more effort yield $c_3 \geq \frac{1}{93}$ (see~\cite{thesis} for details).

\section{Proof of Theorem~\ref{NumEdges}}
\label{tdc}

The proof of Theorem~\ref{NumEdges} takes inspiration from
the well known proof of Beck's Theorem \citep{BecksTheorem} as a
corollary of the Szemer\'edi--Trotter Theorem \citep{SzemTrot-Comb83},
and also from the simple proof of the Szemer\'edi--Trotter Theorem due to
\citet{Szekely-CPC97}, which in turn is based on the Crossing Lemma. 

The \emph{crossing number} of a graph $G$, denoted by $\CR(G)$, is the
minimum number of crossings in a drawing of $G$. 
The following lower bound on $\CR(G)$ was first proved by
\citet{Ajtai82} and \citet{Leighton83} (with worse constants).  
A simple proof with better constants can be found in \citep{Proofs3}.  
The following version is due to \citet{PRTT-DCG06}.

\begin{theorem}[Crossing Lemma]
  \label{CrossingLemma} For every graph $G$ with $n$ vertices and $m\geq\frac{103}{16}n$
  edges, $$\CR(G)\geq\frac{1024\,m^3}{31827\,n^2} \enspace. $$ 
\end{theorem}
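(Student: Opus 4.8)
The plan is to follow the two-step template behind every proof of a Crossing Lemma: first establish a linear (affine) lower bound on $\CR(G)$ that is strong in the sparse regime, and then bootstrap it to the cubic bound by random vertex sampling. Reverse-engineering the stated constants tells me exactly which linear bound to aim for. Writing $a$ and $b$ for the coefficients of a hypothetical base inequality $\CR(H)\ge a\,e(H)-b\,v(H)$, the sampling argument below produces $\CR(G)\ge\frac{4a^3}{27b^2}\cdot\frac{m^3}{n^2}$ subject to an edge threshold $m\ge\frac{3b}{2a}\,n$. Matching $\frac{3b}{2a}=\frac{103}{16}$ against the edge hypothesis and $\frac{4a^3}{27b^2}=\frac{1024}{31827}=\frac{2^{10}}{3\cdot103^2}$ against the target forces $a=4$ and $b=\frac{103}{6}$. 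So the real goal is the base inequality
$$\CR(H)\ge 4\,e(H)-\tfrac{103}{6}\,v(H)\qquad\text{for every graph }H.$$

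First I would prove this base inequality, which is the crux. For sparse $H$ (roughly $e(H)\le\frac{103}{24}v(H)$) the right-hand side is nonpositive and there is nothing to prove, so all the content lies with denser graphs. The tool is the family of Euler-type edge bounds for $k$-planar drawings: a graph drawn in the plane with every edge crossed at most $k$ times has at most $3(v-2)$, $4(v-2)$, $5(v-2)$, and $\tfrac{11}{2}(v-2)$ edges for $k=0,1,2,3$ respectively. Starting from an optimal drawing of $H$, classifying edges by how many times they are crossed, charging crossings against these thresholds, and combining the four inequalities with suitably chosen nonnegative weights yields the coefficients $4$ and $\frac{103}{6}$ (the constant $-2$ shifts only help the inequality, so they may be dropped). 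This is where all the difficulty and all the numerology live.

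Next comes the routine amplification. Given $G$ with $v(G)=n$, $e(G)=m$, and $m\ge\frac{103}{16}n$, form a random induced subgraph $H$ by keeping each vertex independently with probability $p\in(0,1]$, and give $H$ the drawing inherited from an optimal drawing of $G$. A crossing of $G$ survives iff the four endpoints of its two crossing edges are all kept, so the expected number of crossings in the inherited drawing of $H$ is $p^4\CR(G)$; likewise $\mathbb{E}[v(H)]=pn$ and $\mathbb{E}[e(H)]=p^2m$. Since the inherited drawing has at least $\CR(H)$ crossings, applying the base inequality and taking expectations gives $p^4\CR(G)\ge 4p^2m-\frac{103}{6}pn$, hence $\CR(G)\ge\frac{4m}{p^2}-\frac{103n}{6p^3}$.

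Finally I would optimize the free parameter. The right-hand side is maximized at $p=\frac{103n}{16m}$, and the hypothesis $m\ge\frac{103}{16}n$ is exactly what guarantees $p\le1$, so this choice is legitimate; substituting it collapses the two terms into $\frac{1024}{31827}\cdot\frac{m^3}{n^2}$, completing the proof. I expect the base inequality to be the only genuine obstacle: the sampling and optimization steps are mechanical once the linear bound with the precise constants $4$ and $\frac{103}{6}$ is in hand, whereas extracting those exact constants from the $k$-planar edge bounds requires the careful weighted combination that constitutes the real improvement over the classical $\CR(H)\ge e(H)-3v(H)$.
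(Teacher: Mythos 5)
Note first that the paper does not prove this lemma at all: it is quoted verbatim from Pach, Radoi\v{c}i\'c, Tardos and T\'oth \citep{PRTT-DCG06}, so your proposal has to be measured against that source. Your reverse-engineering is exactly right, and the amplification half of your argument is correct and identical to theirs: PRTT amplify the linear bound $\CR(H)\geq 4\,e(H)-\frac{103}{6}(v(H)-2)$ by random vertex sampling, and your expectation computation, the choice $p=\frac{103n}{16m}$, the observation that $p\leq 1$ is exactly the hypothesis $m\geq\frac{103}{16}n$, and the resulting constant $\frac{4a^3}{27b^2}=\frac{1024}{31827}$ all check out.

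However, there is a genuine gap at the crux, precisely where you located it. Combining the four $k$-planar edge bounds $3(n-2),4(n-2),5(n-2),\frac{11}{2}(n-2)$ by the charging/greedy-deletion argument yields only $\CR(H)\geq 4e-\bigl(3+4+5+\tfrac{11}{2}\bigr)(v-2)=4e-\tfrac{35}{2}(v-2)$: while the graph has more than $m_k$ edges some edge is crossed at least $k+1$ times, and once you insist on the leading coefficient $4$ the thresholds force the full sum $\frac{35}{2}$, so no choice of nonnegative weights on these four inequalities alone reaches $\frac{103}{6}$. Feeding $b=\frac{35}{2}$ into your own optimization gives $\CR(G)\geq\frac{1024}{33075}\frac{m^3}{n^2}$ for $m\geq\frac{105}{16}n$ --- close to, but strictly weaker than, the stated theorem. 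To lower $\frac{35}{2}$ to $\frac{103}{6}$, PRTT prove a further, independent linear bound, $\CR(H)\geq\frac{7}{3}e(H)-\frac{25}{3}(v(H)-2)$, by a separate and more delicate argument, and splice it in below the $5(v-2)$ threshold: greedy deletion down to $e=5(v-2)$ gains $4\bigl(e-\frac{11}{2}(v-2)\bigr)+\frac{3}{2}(v-2)$ crossings, the auxiliary bound then supplies another $\frac{10}{3}(v-2)$, and since $22-\frac{3}{2}-\frac{10}{3}=\frac{103}{6}$ this gives the base inequality with the right constant. So the missing idea is that auxiliary $\frac{7}{3}e-\frac{25}{3}(v-2)$ inequality; the numerology you correctly targeted is unattainable from the $k$-planar edge bounds by weighting alone.
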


In fact, we employ a slight strengthening of the Szemer\'edi--Trotter Theorem formulated in terms of visibility graphs.
The \emph{visibility graph} $G$ of a point set $P$ has vertex set $P$,
where $vw\in E(G)$ whenever the line segment $vw$ contains no other
point in $P$ (that is, $v$ and $w$ are consecutive on a line
determined by $P$).

For $i \geq 2$, an  \emph{$i$-line} is a line containing exactly $i$ points  in $P$. 
Let $s_i$ be the number of $i$-lines. 
Let $G_i$ be the spanning subgraph of the visibility
graph of $P$ consisting of all edges in $j$-lines where $j\geq
i$; see Figure~\ref{Grid} for an example.  
Note that since each $i$-line contributes $i-1$ edges, $|E(G_i)|=\sum_{j\geq i}(j-1)s_j$. 
Part (a) of the following version of the Szemer\'edi--Trotter Theorem gives a bound on $|E(G_i)|$, while part (b) is the well known version that bounds the number of $j$-lines for $j\geq i$.

\begin{figure}
  \includegraphics[width=\textwidth]{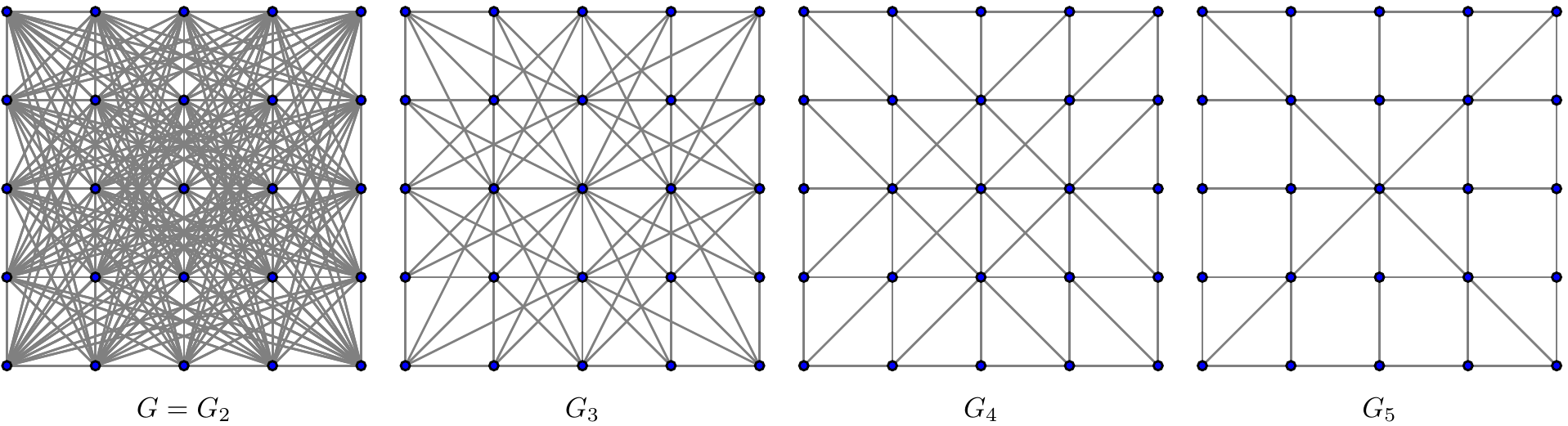}
  \caption{The graphs $G_2,G_3,G_4,G_5$ in the case of the $5\times 5$
    grid.}
  \label{Grid}
\end{figure}

\begin{theorem}[Szemer\'edi--Trotter Theorem] 
\label{szemtrot}
 Let $\alpha$ and $\beta$ be positive constants such that every graph
  $H$ with $n$ vertices and $m\geq\alpha n$ edges
  satisfies $$\CR(H)\geq \frac{m^3}{\beta n^2}\enspace.$$
Let $P$ be a set of $n$ points in the plane.  
Then 
\begin{alignat*}{3}
\mbox{(a)} \enspace && \enspace \sum_{j\geq    i}(j-1)s_j
 &\leq \max \left\{ \alpha n,  \frac{\beta \,n^2 }{2(i-1)^2} \right\} \enspace , \\
\mbox{and (b)} \enspace && \sum_{j\geq i}s_j &\leq \max\left\{ \frac{\alpha n}{i-1},  \frac{\beta \,n^2 }{2(i-1)^3} \right\} \enspace .
\end{alignat*}
\end{theorem}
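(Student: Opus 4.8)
The plan is to apply the hypothesised Crossing Lemma to the graph $G_i$, mimicking Sz\'ekely's proof of the Szemer\'edi--Trotter Theorem. Write $m := |E(G_i)| = \sum_{j\geq i}(j-1)s_j$ for the quantity to be bounded in part (a), and let $L := \sum_{j\geq i}s_j$ denote the number of lines determined by $P$ that contain at least $i$ points. The first step is to draw $G_i$ in the plane by placing each vertex at its point of $P$ and drawing each edge as the corresponding visibility segment. Since all edges lying on a common line are collinear, they never cross one another; and two edges lying on distinct lines can cross only at the unique intersection point of those two lines, contributing at most one crossing per pair of lines. Hence this particular drawing has at most $\binom{L}{2}\leq\tfrac12 L^2$ crossings, and therefore $\CR(G_i)\leq\binom{L}{2}\leq\tfrac12 L^2$.

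The second ingredient is the elementary counting inequality $L\leq\frac{m}{i-1}$, which holds because every line counted in $L$ has at least $i$ points and so contributes at least $i-1$ edges to $G_i$; equivalently $m=\sum_{j\geq i}(j-1)s_j\geq(i-1)\sum_{j\geq i}s_j=(i-1)L$. I then split into two cases according to whether the Crossing Lemma hypothesis is available. If $m<\alpha n$ then part (a) holds immediately. Otherwise $m\geq\alpha n$, so the assumed bound gives $\CR(G_i)\geq\frac{m^3}{\beta n^2}$. Combining this lower bound with the upper bound from the drawing and the inequality $L\leq m/(i-1)$ yields
\[
\frac{m^3}{\beta n^2}\leq\CR(G_i)\leq\frac{L^2}{2}\leq\frac{m^2}{2(i-1)^2}\enspace,
\]
and cancelling $m^2$ rearranges to $m\leq\frac{\beta n^2}{2(i-1)^2}$. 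Taking the two cases together proves part (a).

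Part (b) then follows with no further geometry: applying $L\leq m/(i-1)$ once more and substituting the bound from (a),
\[
\sum_{j\geq i}s_j=L\leq\frac{m}{i-1}\leq\frac{1}{i-1}\max\left\{\alpha n,\ \frac{\beta n^2}{2(i-1)^2}\right\}=\max\left\{\frac{\alpha n}{i-1},\ \frac{\beta n^2}{2(i-1)^3}\right\}\enspace,
\]
since multiplying a maximum of nonnegative quantities by the positive constant $\frac{1}{i-1}$ distributes over the maximum.

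I expect the only genuinely delicate step to be the crossing-count bound $\CR(G_i)\leq\binom{L}{2}$: one must argue carefully that edges on the same line contribute no crossings (they are collinear visibility segments) and that each pair of distinct lines accounts for at most one crossing, so that the total is controlled by the number of \emph{line pairs} rather than edge pairs. The remaining algebra, including the case split that produces the maxima, is routine.
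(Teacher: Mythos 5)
Your proposal is correct and follows essentially the same route as the paper: apply the hypothesised Crossing Lemma to $G_i$ when $|E(G_i)|\geq\alpha n$, bound $\CR(G_i)$ above by $\binom{\sum_{j\geq i}s_j}{2}$ since two lines cross at most once, combine via $(i-1)\sum_{j\geq i}s_j\leq\sum_{j\geq i}(j-1)s_j$, and deduce (b) from (a) by dividing by $i-1$. The only cosmetic difference is the order of cancellation (you cancel $m^2$ where the paper cancels $\bigl(\sum_{j\geq i}s_j\bigr)^2$), which is algebraically equivalent.
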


\begin{proof}
Suppose $\sum_{j\geq    i}(j-1)s_j =|E(G_i)| \geq \alpha n$. Then by the assumed
Crossing Lemma applied to $G_i$,
$$
\CR(G_i) 
\geq \frac{|E(G_i)|^3}{\beta n^2} 
=\frac{(\sum_{j\geq    i}(j-1)s_j)^2|E(G_i)|}{\beta n^2} 
\geq \frac{(i-1)^2(\sum_{j \geq    i}s_j)^2|E(G_i)|}{\beta n^2} 
\enspace.
$$
On the other hand, since two lines cross at most once,
$$
\CR(G_i)
\leq\binom{\sum_{j\geq i} s_j}{2} \leq\frac{1}{2}\Big(\sum_{j\geq
  i}s_j\Big)^2 \enspace.
$$ Combining these inequalities yields part (a). Part (b) follows directly from part (a).
\end{proof}

The proof of Theorem~\ref{NumEdges} also employs Hirzebruch's Inequality~\cite{Hirzebruch}.

\begin{theorem}[Hirzebruch's Inequality]
Let $P$ be a set of $n$ points with at most $n-3$ collinear. 
Then $$ s_2 +\frac{3}{4}s_3 \geq n + \sum_{i\geq5}(2i-9)s_i \enspace  . $$
\end{theorem}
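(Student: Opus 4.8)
Although the statement is about a real point set, the inequality is genuinely a fact about complex line arrangements, so the first move is projective duality. Duality sends each point of $P$ to a line and each line determined by $P$ to a point in the dual projective plane $\mathbb{P}^2(\mathbb{C})$; a line through exactly $i$ points of $P$ becomes a point lying on exactly $i$ of the $n$ dual lines. Regarding these $n$ real dual lines as complex lines and writing $t_r$ for the number of points of multiplicity $r$ in the resulting arrangement $\mathcal{A}$, we have $t_r=s_r$ for all $r$, and the hypothesis that at most $n-3$ points of $P$ are collinear becomes $t_{n-2}=t_{n-1}=t_n=0$; in particular $\mathcal{A}$ is neither a pencil nor a near-pencil. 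It therefore suffices to prove
$$ t_2+\tfrac34 t_3 \;\geq\; n+\sum_{r\geq5}(2r-9)\,t_r $$
for every such arrangement.

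The only known route is through the Bogomolov--Miyaoka--Yau inequality $c_1^2\leq 3c_2$ for a smooth projective surface of general type. First I would resolve $\mathcal{A}$: blowing up $\mathbb{P}^2(\mathbb{C})$ at every point of multiplicity at least $3$ produces a surface $\pi\colon\widehat{\mathbb{P}}\to\mathbb{P}^2(\mathbb{C})$ on which the total transform of $\mathcal{A}$ (the proper transforms of the $n$ lines together with the exceptional curves) is a divisor with normal crossings, and whose numerical invariants record the data $(n,t_r)$.

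Next I would build Hirzebruch's abelian cover. Fixing an integer $m\geq2$ and linear forms $\ell_1,\dots,\ell_n$ defining the lines, adjoining the $m$-th roots of the ratios $\ell_i/\ell_n$ yields a Galois cover $f\colon X_m\to\widehat{\mathbb{P}}$ with group $(\mathbb{Z}/m\mathbb{Z})^{n-1}$ branched exactly along the normal-crossing divisor; for suitable $m$ the surface $X_m$ is smooth. Its Chern numbers $c_1^2(X_m)$ and $c_2(X_m)$ can be computed purely combinatorially from $n$, $m$ and the $t_r$, and are polynomials in $m$. For $m$ large the minimal model of $X_m$ is of general type, so $c_1^2(X_m)\leq3c_2(X_m)$; comparing the leading coefficients in $m$ (equivalently, dividing by the degree of the cover and letting $m\to\infty$) turns this into an inequality among the arrangement data that simplifies to the displayed bound. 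Equivalently, the whole computation can be packaged as the orbifold (logarithmic) Chern numbers $\bar c_1^2,\bar c_2$ of the pair $(\mathbb{P}^2(\mathbb{C}),\mathcal{A})$, to which the orbifold Miyaoka--Yau inequality applies directly.

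The main obstacle, and the reason no elementary proof is known, is the analytic input $c_1^2\leq3c_2$ itself together with the verification that it applies: one must check that (the minimal model of) $X_m$ is smooth and of general type rather than rational or ruled, and then carry out the Chern-number bookkeeping for a cover branched along a normal-crossing divisor without error. This is exactly where the collinearity hypothesis is spent: a pencil or near-pencil yields a cover that is not of general type, so that BMY degenerates, and ruling these out is precisely the content of $t_{n-1}=t_n=0$.
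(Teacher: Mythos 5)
The paper does not actually prove this statement: Hirzebruch's Inequality is quoted from the cited reference and used as a black box, so there is no internal proof to compare yours against. Your outline faithfully reproduces Hirzebruch's own argument --- dualise to a complex line arrangement with $t_r=s_r$, blow up the points of multiplicity at least $3$, take the $(\mathbb{Z}/m\mathbb{Z})^{n-1}$ abelian cover branched along the resulting normal crossing divisor, compute its Chern numbers as polynomials in $m$, and apply $c_1^2\leq 3c_2$ to the minimal model --- and you correctly locate where the collinearity hypothesis is spent (excluding pencils and near-pencils, for which general type fails). That said, as written this is a road map rather than a proof: the entire content of the theorem, namely the specific coefficients $\tfrac34$ and $2r-9$, lives inside the Chern-number bookkeeping that you defer with ``can be computed purely combinatorially,'' and nothing in your text would let a reader check that the computation lands on this particular linear inequality in the $t_r$; the verification that $X_m$ is of general type for large $m$ is likewise asserted rather than carried out, and it is not a formality. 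One small correction: once the branch divisor has normal crossings after blowing up, the abelian cover is smooth for every $m\geq 2$; the role of taking $m$ large (or letting $m\to\infty$ to isolate leading coefficients) is only to guarantee general type and to extract the clean inequality, not to achieve smoothness. So your proposal correctly identifies the (only known) route, which is exactly the one in the source the paper cites, but it would not stand alone as a proof of the displayed inequality.
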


Theorem~\ref{NumEdges} follows from Theorem~\ref{CrossingLemma} and
the following general result by setting 
$\alpha=\frac{103}{16}$, $\beta=\frac{31827}{1024}$, $c=71$, and $\delta = \eps$, in which case $\delta \geq \frac{1}{36.158}$. 
The value of $\delta$ is readily calculated numerically since 
      since 
$$\sum_{i\geq c}\frac{i+1}{i^3}
    = \sum_{i\geq 1}\frac{i+1}{i^3} - \sum_{i=1}^{c-1}\frac{i+1}{i^3} 
    = \zeta(2) + \zeta(3)  -  \sum_{i=1}^{c-1}\frac{i+1}{i^3} 
    = 2.847\ldots -  \sum_{i=1}^{c-1}\frac{i+1}{i^3}\enspace, $$
   where $\zeta$ is the Riemann zeta function.

\begin{theorem}
  \label{NumEdgesGeneral}
  Let $\alpha$ and $\beta$ be positive constants such that every graph
  $H$ with $n$ vertices and $m\geq\alpha n$ edges
  satisfies $$\CR(H)\geq \frac{m^3}{\beta n^2}\enspace.$$ 
 Fix an integer $c\geq 8$ and a real $\eps\in(0,\frac{1}{2})$. Let $h:= \frac{c(c-2)}{5c-18}$. 
Then for every set $P$ of $n$
points in the plane with at most $\eps n$ collinear points, the
arrangement of $P$ has at least $\delta n^2$ point-line incidences, where 
$$
\delta = \frac{1}{h+1} 
\left( 1-\eps\alpha - 
 \frac{\beta}{2} \left(\frac{(c-h-2)(c+1)}{c^3} + \sum_{i\geq c}\frac{i+1}{i^3} \right)  \right)  
\enspace .
$$
\end{theorem}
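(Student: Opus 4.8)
The plan is to work with the numbers $s_i$ of $i$-lines and to play two exact counts off against each other: the number of point–line incidences $I=\sum_{i\ge2} i\,s_i$, and the number of ordered collinear pairs $\sum_{i\ge2} i(i-1)s_i=n(n-1)$ (each of the $n(n-1)$ ordered pairs lies on a unique line). Since $(h+1)I=\sum_{i\ge2}(h+1)i\,s_i$, subtracting gives the pivotal identity
$$(h+1)I = n(n-1)-\Sigma,\qquad \Sigma:=\sum_{i\ge2} i(i-h-2)s_i.$$
Thus it suffices to prove $\Sigma\le n^2\big(\eps\alpha+\tfrac{\beta}{2}X\big)-n$, where $X$ is the bracketed quantity in the statement; dividing the identity by $h+1$ then yields $I\ge\delta n^2$ exactly, the $-n$ cancelling the $-n$ in $n(n-1)$.

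The weight $i(i-h-2)$ is negative for $i<h+2$ and positive for $i>h+2$, so $\Sigma$ is a contest between small lines (which help) and large lines (which hurt). I would split $\Sigma$ at the threshold $c$ and treat each range with one of the two available tools. For the small range $2\le i\le c$ I would use Hirzebruch's Inequality in the form $s_2+\tfrac34 s_3-\sum_{i\ge5}(2i-9)s_i\ge n$: I look for a multiplier $\lambda\ge0$ making the \emph{coefficientwise} inequality $i(i-h-2)+\lambda\chi_i\le0$ hold throughout, where $\chi_2=1,\ \chi_3=\tfrac34,\ \chi_4=0$ and $\chi_i=-(2i-9)$ for $i\ge5$. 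Then the small part of $\Sigma$ is at most $-\lambda\big(s_2+\tfrac34 s_3-\sum_{i\ge5}(2i-9)s_i\big)\le-\lambda n$ (the omitted large-$i$ Hirzebruch terms only help, as they have the favourable sign). The cases $i=2,3,4$ reduce to $\lambda\le2h$, while $5\le i\le c$ requires $i(i-h-2)\le\lambda(2i-9)$; the ratio $\tfrac{i(i-h-2)}{2i-9}$ is increasing in $i$ here, so the binding constraint sits at $i=c$. Setting $\lambda=2h$ and imposing equality $c(c-h-2)=2h(2c-9)$ rearranges to $5hc-18h=c(c-2)$, i.e.\ $h=\tfrac{c(c-2)}{5c-18}$. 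This is exactly why $h$ takes the stated value: it is the smallest $h$ for which a valid multiplier exists, and since $\lambda=2h\ge1$ the bound $-\lambda n\le-n$ absorbs the linear term.

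For the large range $i>c$ I would bound $\sum_{i>c} i(i-h-2)s_i$ using the Szemer\'edi--Trotter estimates of Theorem~\ref{szemtrot}, which control the tails $\sum_{j\ge i}(j-1)s_j$ and $\sum_{j\ge i}s_j$. Summation by parts against these tails (or a direct per-line bound $s_i\le\sum_{j\ge i}s_j$) converts the weighted sum into a convergent series: the boundary term at the threshold produces $\tfrac{(c-h-2)(c+1)}{c^3}$, and the telescoped tail produces $\sum_{i\ge c}\tfrac{i+1}{i^3}$, together forming $X$. The genuinely largest lines—those with close to $\eps n$ points—fall in the regime where the Szemer\'edi--Trotter edge bound reads $\alpha n$ rather than $\tfrac{\beta n^2}{2(i-1)^2}$; since every line has at most $\eps n$ points, their total contribution is at most $\eps n\cdot\alpha n=\eps\alpha n^2$, which is the source of the $\eps\alpha$ term.

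I expect the main obstacle to be precisely this large-line estimate: one must carry out the summation by parts cleanly, track the switch in the Szemer\'edi--Trotter maximum from the $\tfrac{\beta n^2}{2(i-1)^2}$ branch to the $\alpha n$ branch (the crossover being where the $\eps\alpha$ contribution originates), and verify that the residual series collapses to the stated closed form $X$. A secondary technical point is confirming the medium-line inequality $i(i-h-2)\le2h(2i-9)$ across the whole range $5\le i\le c$ rather than only at the binding endpoint, and checking the hypothesis of Hirzebruch's Inequality, which holds since at most $\eps n<n-3$ points are collinear.
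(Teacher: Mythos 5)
Your proposal is essentially the paper's own proof: your identity $(h+1)I=n(n-1)-\Sigma$ is the ordered-pair version of the paper's decomposition $\binom{n}{2}\le \smallpairs+\medpairs+\largepairs$ with each part bounded by $X=\frac{h+1}{2}$ times the corresponding incidence count, your Hirzebruch multiplier $\lambda=2h$ with the constraint binding at $i=c$ is exactly how the paper arrives at $h=\frac{c(c-2)}{5c-18}$, and your planned summation by parts against the Szemer\'edi--Trotter tails plus the $\eps n\cdot\alpha n$ bound for lines beyond the crossover are precisely the paper's medium- and large-pair estimates (producing $Y\frac{c+1}{c^3}$ with $Y=c-h-2$ and $\sum_{i\ge c}\frac{i+1}{i^3}$). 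The one quibble is your claim that $\frac{i(i-h-2)}{2i-9}$ is increasing on $[5,c]$: it can dip just above $i=5$ and is only unimodal with its maximum at an endpoint, and the hypothesis $c\ge 8$ is exactly what forces that maximum to sit at $i=c$ rather than $i=5$ --- this is the endpoint comparison the paper performs, and your argument needs it too.
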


\begin{proof} 
Let $J:=\{2,3,\dots,\floor{\eps n}\}$. 
Considering the visibility graph $G$ of $P$ and its subgraphs $G_i$ as defined previously, let $k$ be the minimum integer such that
$|E(G_k)|\leq\alpha n$. If there is no such $k$ then let $k:=\floor{\epsilon n}+1$. 
An integer $i\in J$ is \emph{large} if $i\geq k$, and is \emph{small}
if $i\leq c$. An integer in $J$ that is neither large nor small is
\emph{medium}.
 
An \emph{$i$-pair} is
  a pair of points in an $i$-line.  
 A \emph{small pair} is an $i$-pair for some small $i$.
Define \emph{medium pairs} and \emph{large pairs} analogously, and let $\smallpairs, \medpairs$ and $\largepairs$ denote the number of small, medium and large pairs respectively. 
An \emph{$i$-incidence} is an incidence between a point of $P$ and an $i$-line.
A \emph{small incidence} is an $i$-incidence for some small $i$.  Define
\emph{medium incidences} 
analogously, and let $\smallinc$ and $\medinc$ denote the number of small and medium incidences respectively. Let $I$ denote the total number of incidences. Thus, $$I = \sum_{i \in J} is_i \enspace. $$

The proof procedes by establishing an upper bound on the number of small
pairs in terms of the number of small incidences. Analogous bounds are
proved for the number of medium pairs, and the number of large
pairs. Combining these results gives the desired lower bound on the
total number of incidences.

For the bound on small pairs, Hirzebruch's Inequality is useful. 
Since at most $\frac{n}{2}$ points are collinear and $n\geq 5$, there are no more than $n-3$ collinear points.
Therefore, Hirzebruch's Inequality implies that $ hs_2 +\frac{3h}{4}s_3 - hn - h\sum_{i\geq5}(2i-9)s_i \geq 0 $ since $h>0$. 
Thus,
\begin{align*}
   \smallpairs  
  =\;& s_2 + 3s_3 + 6s_4 + \sum_{i=5}^c\binom{i}{2}s_i \\
  \leq\; &  (h+1)s_2 + \left(\frac{3h}{4} +3\right)s_3 + 6s_4 + \sum_{i=5}^c \binom{i}{2}s_i - hn -h\sum_{i=5}^c (2i-9)s_i  \\ 
  \leq\;  & \frac{h+1}{2}\cdot 2s_2 + \frac{h+4}{4}\cdot 3s_3 + \frac{3}{2}\cdot 4s_4 + \sum_{i=5}^c \left(\frac{i-1}{2} - 2h +\frac{9h}{i} \right)is_i -hn \enspace.
\end{align*}
Setting
$X: = \max
 \left\{ \frac{h+1}{2}, \frac{h+4}{4}, \frac{3}{2}, \max_{5\leq i\leq c} \left(\frac{i-1}{2} - 2h +\frac{9h}{i} \right) 
 \right\} 
$
%
%
implies that 
\begin{equation}
  \label{SmallPairs}
  \smallpairs \leq X \smallinc -hn 
 \enspace.
\end{equation}
Considering the second partial derivative with respect to $i$ shows that $\frac{i-1}{2} - 2h +\frac{9h}{i}$ is maximised for $i=5$ or $i=c$. Some linear optimisation shows that, since $c\geq 8$, $X$ is minimised when $h= \frac{c(c-2)}{5c-18}$ and $X= \frac{h+1}{2} = \frac{c-1}{2} - 2h +\frac{9h}{c} $.

To bound the number of medium pairs, consider a medium $i\in
J$. Since $i$ is not large, $\sum_{j\geq i} (j-1)s_j >\alpha n$.
Hence, using parts (a) and (b) of the Szemer\'edi--Trotter Theorem, 
\begin{equation}\label{MediumLines}
\sum_{j\geq i} j s_j
=
 \sum_{j\geq i} (j-1) s_j  +
 \sum_{j\geq i} s_j 
\leq
\frac{\beta n^2}{2(i-1)^2}
+ \frac{\beta n^2}{2(i-1)^3}
=
\frac{\beta n^2 i}{2(i-1)^3} \enspace.  
\end{equation}


Given the factor $X$ in the bound on the number of small pairs in \eqref{SmallPairs}, it helps to introduce the same factor in the bound on the number of medium pairs. It will be convenient to define $Y:=c-1-2X$.
\begin{align*}
   \medpairs - X\medinc 
  =\; & \left( \sum_{i=c+1}^{k-1} \binom{i}{2}s_i \right) - 
  X\left(\sum_{i=c+1}^{k-1} is_i\right)  \\
  =\; &
  \frac{1}{2}\sum_{i=c+1}^{k-1} \left(i-1 - 2X \right)is_i  \\
  =\; &
  \frac{1}{2}\sum_{i=c+1}^{k-1} \left(i-c + Y \right)is_i  \\
  =\;  &
 \frac{1}{2}\left( \sum_{i= c+1}^{k-1}\sum_{j= i}^{k-1}js_j \right) + \frac{Y}{2} \left( \sum_{i=c+1}^{k-1} is_i \right) \enspace .  \\
\end{align*}
Applying \eqref{MediumLines} yields
%
\begin{equation}  \medpairs - X\medinc \leq 
\frac{\beta \,n^2}{4} \left(Y\frac{c+1}{c^3} + \sum_{i\geq c}\frac{i+1}{i^3} \right) \label{MediumPairs} \enspace.
\end{equation}

It remains to bound the number of large pairs:
\begin{equation}
  \label{LargePairs}
\largepairs
  =  \sum_{i=k}^{\floor{\eps n}} \binom{i}{2}s_i
  \leq  \frac{\eps n}{2}\sum_{i\geq k}(i-1)s_i
  =  \frac{\eps n}{2} |E(G_k)|
  \leq  \frac{\eps\alpha \,n^2}{2} \enspace.
\end{equation}
Combining \eqref{SmallPairs}, \eqref{MediumPairs} and \eqref{LargePairs},
\begin{align*}
  \binom{n}{2} =
  \frac{1}{2}(n^2-n) \leq\; & \smallpairs +\medpairs + \largepairs\\
  \leq\; &
  X\smallinc -h n + X\medinc  + 
 \frac{\beta \,n^2}{4} \left(Y\frac{c+1}{c^3} + \sum_{i\geq c}\frac{i+1}{i^3} \right) + \frac{\eps\alpha \,n^2}{2}
    \enspace.
\end{align*}
Thus,
$$ 
I \geq \smallinc + \medinc \geq  \frac{1}{2X} 
\left( 1-\eps\alpha - 
 \frac{\beta}{2} \left(Y\frac{c+1}{c^3} + \sum_{i\geq c}\frac{i+1}{i^3} \right)  \right) n^2 +\frac{2h-1}{2X} n
 \enspace.
$$ 
The result follows since $h\geq 1$.
%
%
\end{proof}

\section{A constant for Beck's Theorem}
\label{acfbt}

Beck proved Theorem~\ref{beckthm} as part of his work on Dirac's Conjecture~\cite{Beck-Comb83}. 
Theorem~\ref{NumEdgesGeneral} from the previous section and Lemmas~\ref{kmprop} and~\ref{Extra} below can be used to give the best known constant in Beck's Theorem. 
\begin{theorem}\label{beckthmc}
Every set $P$ of $n$ points with at most $\ell$ collinear determines at least $\frac{1}{98}n(n-\ell)$ lines.
\end{theorem}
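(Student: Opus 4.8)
The plan is to bound the total number of lines \(L=\sum_{i\ge2}s_i\) from below by the number of \emph{small} lines \(\smalllines=\sum_{i=2}^{c}s_i\), since the rest are negligible: for \(i>c\) we have \(s_i\le\frac1c(i-1)s_i\), so \(\medlines+\largelines=\sum_{i>c}s_i\le\frac1c\sum_{i>c}(i-1)s_i=\frac1c|E(G_{c+1})|\), which part~(a) of the Szemer\'edi--Trotter Theorem bounds by \(O(n^2/c^3)+O(n/c)\). I would then split into two regimes according to a threshold \(\ell\ge\theta n\) to be fixed.

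When \(\ell\) is close to \(n\), I would use the elementary Kelly--Moser-type counting of Lemma~\ref{kmprop}. Fixing a line \(L_0\) with \(\ell\) points and introducing the \(k:=n-\ell\) off-points one at a time, each new off-point \(p\) lies on \(\ell\) distinct lines to \(L_0\) (two such lines coincide only along \(L_0\), which avoids \(p\)), and at most one of these passes through each previously introduced off-point; hence the \(j\)-th off-point contributes at least \(\ell-(j-1)\) new lines, giving \(L\ge k\ell-\binom{k}{2}=(n-\ell)\cdot\frac{3\ell-n+1}{2}\). This exceeds \(\frac1{98}n(n-\ell)\) once \(\ell\) is a little above \(\tfrac n3\), so it disposes of the regime \(\ell\ge\theta n\).

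For \(\ell\le\theta n\) I would recycle the machinery behind Theorem~\ref{NumEdgesGeneral}. That proof already gives, via Hirzebruch's Inequality, the small-pairs estimate \(\smallpairs\le X\smallinc-hn\), together with Szemer\'edi--Trotter upper bounds on \(\medpairs\) and on \(\largepairs\); crucially the large-pair bound \(\largepairs\le\frac{\ell}{2}|E(G_k)|\le\frac{\alpha}{2}n\ell\) carries exactly the \(n\ell\) factor that Beck's Theorem requires. Since a small line has at most \(c\) points, \(\smallinc\le c\,\smalllines\), so the small-pairs inequality becomes a lower bound on lines, \(\smalllines\ge\frac{\smallpairs+hn}{cX}\). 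Substituting \(\smallpairs=\binom n2-\medpairs-\largepairs\) and invoking Lemma~\ref{Extra} for the residual estimate on \(\medpairs\), one obtains \(L\ge\smalllines\ge An^2-Bn\ell\) with explicit \(A,B\); choosing \(c\) and \(\eps=\theta\) so that \(A\ge\frac1{98}\ge B\), or so that \(A-B\theta\ge\frac1{98}(1-\theta)\) using \(\ell\le\theta n\), closes this regime.

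The main obstacle is that the target is the \emph{number} of lines, not the number of incidences. Because a single small line can carry as many as \(\binom c2\) point-pairs, the trivial conversion \(\smalllines\ge\smallpairs/\binom c2\) is too lossy to reach \(\tfrac1{98}\); the sharper conversion \(\smalllines\ge(\smallpairs+hn)/(cX)\) recovers the missing factor, and this is precisely where Hirzebruch's Inequality is spent, since it governs \(X=\tfrac{h+1}{2}\). The remaining difficulty is purely numerical and genuinely tight: the Szemer\'edi--Trotter constants push \(c\) to be large in order to make \(\medpairs\) and \(\largepairs\) small, whereas the line-conversion favours small \(c\), and the two regimes must be forced to overlap at the common constant \(\tfrac1{98}\). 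Checking that one choice of \(c\) and \(\theta\) simultaneously settles both cases is the crux of the argument.
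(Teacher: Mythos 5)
Your two regimes do not meet, and that is the fatal gap. The direct counting argument you use when $\ell$ is large gives $L\geq \ell(n-\ell)-\binom{n-\ell}{2}=(n-\ell)\tfrac{3\ell-n+1}{2}$, which exceeds $\tfrac{1}{98}n(n-\ell)$ only once $\ell\gtrsim 0.34\,n$; but the incidence machinery of Theorem~\ref{NumEdgesGeneral} that you invoke for the other regime needs $\eps\alpha<1$, i.e.\ $\ell<\tfrac{16}{103}n\approx 0.155\,n$, just for $\delta$ to be positive, and in practice $\ell\leq\tfrac{n}{49}$ to get a usable constant. The entire range $\tfrac{n}{49}<\ell<\tfrac{n}{3}$ is left uncovered. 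The paper closes it by combining Lemma~\ref{Extra} with Lemma~\ref{kmprop}: a line with exactly $\ell$ points forces $E\geq\ell(n-\ell)$ visibility edges, and $2L\geq 3+E$ then gives $L\geq\tfrac{1}{2}\ell(n-\ell)\geq\tfrac{1}{98}n(n-\ell)$ for \emph{every} $\ell\geq\tfrac{n}{49}$. Your bound $(n-\ell)\tfrac{3\ell-n+1}{2}$ is strictly weaker than $\tfrac{1}{2}\ell(n-\ell)$ throughout $\ell<\tfrac{n-1}{2}$, so it cannot substitute for this.

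Second, your conversion from incidences to lines in the small-$\ell$ regime is too lossy to reach $\tfrac{1}{98}$. Chaining $\smallpairs\leq X\smallinc-hn$ with $\smallinc\leq c\,\smalllines$ costs a factor $cX$, and with $X=\tfrac{h+1}{2}\approx 7.4$ at $c=67$ this is about $490$; since $\smallpairs\leq\binom{n}{2}$, your bound tops out near $n^2/990$ even in the best case, an order of magnitude short. (Your appeal to Lemma~\ref{Extra} ``for the residual estimate on $\medpairs$'' is also a misattribution: that lemma counts visibility edges emanating from one long line and says nothing about medium pairs, which are controlled by the Szemer\'edi--Trotter bounds.) The paper instead converts at the very end via Melchior's Inequality in the form of Lemma~\ref{kmprop}, $3L\geq 3+I$: Theorem~\ref{NumEdgesGeneral} with $c=67$ and $\tfrac{\eps}{2}=\tfrac{\delta}{3}$ gives $I\geq\tfrac{1}{32.57}n^2$ for $\ell\leq\tfrac{n}{49}$, hence $L>\tfrac{1}{98}n^2\geq\tfrac{1}{98}n(n-\ell)$, losing only a factor of $3$ rather than a factor of $c$. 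If you insist on lower-bounding $\smalllines$ directly from $\smallpairs$, you must recalibrate the Hirzebruch optimisation per line rather than per incidence (as in Theorem~\ref{omegathm}, where $X=h+1$ with a different $h$), which is a genuinely different computation from the one you propose to recycle.
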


The following lemma, due to Kelly and Moser~\cite{KellyMoser58}, follows directly from Melchior's Inequality~\cite{Melchior-DM41}, which states
that $s_2 \geq 3+ \sum_{i\geq 4}(i-3)s_i$. 
As before, $I$ is the total number of incidences in the arrangement of $P$. Let $E$ be the total number of edges in the visibility graph of $P$, and let $L$ be the total number of lines in the arrangement of $P$.

\begin{lemma}[Kelly--Moser]\label{kmprop} If $P$ is not collinear, then $3L \geq 3 + I$, and since $I = E +L$, also $2L \geq 3 + E$.
\end{lemma}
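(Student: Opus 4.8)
The plan is to recognise that the first inequality $3L \ge 3 + I$ is nothing more than Melchior's Inequality rewritten in terms of the global quantities $I$, $E$ and $L$. First I would record the three elementary identities obtained by summing over all lines of the arrangement: since an $i$-line carries $i$ incidences, $i-1$ visibility edges, and counts as a single line, we have $I = \sum_i i\,s_i$, $E = \sum_i (i-1)s_i$, and $L = \sum_i s_i$. Subtracting the first from three times the third gives $3L - I = \sum_i (3-i)s_i$, and splitting off the terms $i=2$ and $i=3$ (the latter contributing nothing) yields $3L - I = s_2 - \sum_{i\ge 4}(i-3)s_i$.

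Next I would invoke the hypothesis that $P$ is non-collinear, which is exactly what licences Melchior's Inequality $s_2 \ge 3 + \sum_{i\ge 4}(i-3)s_i$. Rearranging this bound gives $s_2 - \sum_{i\ge4}(i-3)s_i \ge 3$, so $3L - I \ge 3$, i.e.\ $3L \ge 3 + I$, which establishes the first claim.

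Finally, for the second inequality I would substitute the identity $I = E + L$, which is visible directly from the counting identities above since $E + L = \sum_i(i-1)s_i + \sum_i s_i = \sum_i i\,s_i = I$. This turns $3L \ge 3 + I$ into $3L \ge 3 + E + L$, and cancelling $L$ from both sides yields $2L \ge 3 + E$, as required.

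I do not anticipate any genuine obstacle: the whole argument is a short algebraic reorganisation of Melchior's Inequality, and the only point demanding care is the bookkeeping of the coefficients $(3-i)$, so that the $i=2$ term reappears as $+s_2$ and the $i=3$ term drops out, matching the form of Melchior's bound exactly.
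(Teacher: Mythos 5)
Your proposal is correct and is exactly the derivation the paper has in mind: the paper gives no explicit proof, stating only that the lemma ``follows directly from Melchior's Inequality,'' and your computation $3L-I=\sum_i(3-i)s_i=s_2-\sum_{i\geq 4}(i-3)s_i\geq 3$ together with the identity $I=E+L$ is precisely that direct deduction. The coefficient bookkeeping is right, so there is nothing to add.
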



When there is a large number of collinear points, the following lemma becomes stronger than Theorem~\ref{NumEdgesGeneral}.

\begin{lemma}
  \label{Extra}
  Let $P$ be a set of $n$ points in the plane such that some line 
  contains exactly $\ell$ points in $P$.  Then the visibility graph of $P$ contains at least $\ell(n-\ell)$ edges.
\end{lemma}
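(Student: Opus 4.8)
The plan is to fix a line $\ell^*$ containing exactly $\ell$ points of $P$, call this set of collinear points $C$, and count visibility edges by looking separately at the $n-\ell$ points outside $\ell^*$ and building a disjoint family of edges from each of them to $C$. The key observation is that for any point $v \notin \ell^*$, the $\ell$ points of $C$ are arranged along the line $\ell^*$, and looking outward from $v$ I would like to find one visibility edge from $v$ into $C$. The natural candidate is the segment from $v$ to the point of $C$ \emph{nearest} to $v$ (or nearest along some sweep), since a nearest point cannot have another point of $P$ strictly between it and $v$ on that segment.

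First I would argue that each $v \notin \ell^*$ contributes at least one visibility edge to a point of $C$: among the $\ell$ segments $vw$ with $w \in C$, at least one contains no other point of $P$ in its interior. The cleanest way to see this is to take $w \in C$ minimizing the distance $|vw|$; if some point $u \in P$ lay strictly inside the segment $vw$, then $u$ could not be on $\ell^*$ (as the only points of $C$ on segment $vw\cap\ell^*$ are the single endpoint $w$), so $u\notin\ell^*$, but this does not immediately give a closer point of $C$, so instead I would choose $w$ to be the point of $C$ that is first hit by a ray from $v$, i.e.\ minimize over $C$ the point seen first. The robust statement is: the point of $C$ closest to $v$ is visible from $v$, because any blocker $u$ on segment $vw$ would itself be a point of $P$ strictly closer to the line, and iterating/choosing the genuinely nearest visible lattice point terminates at a point of $C$. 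This gives one edge per exterior point, hence $n-\ell$ edges, which is already the right order but we need the full product $\ell(n-\ell)$.

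To get the factor $\ell$, I would instead count more carefully: fix $v \notin \ell^*$ and consider the $\ell$ points of $C$ in their linear order along $\ell^*$. For \emph{each} such point $w$, either $vw$ is a visibility edge, or the segment $vw$ is blocked by some point $u \in P$; if it is blocked, $u$ lies strictly inside triangle-region between $v$ and $\ell^*$. The main idea is that blockers can be charged injectively: if $vw$ is blocked by $u$, then $u$ determines a distinct line $vu$ extended to $\ell^*$, and distinct targets $w$ give distinct blockers or distinct visibility edges. More straightforwardly, I would count the visibility edges \emph{incident to the $\ell$ points of $C$} directly: each point $w \in C$ sees, in every angular direction off the line $\ell^*$, at least one nearest neighbour, and summing the first-neighbour edges from each $w\in C$ toward the $n-\ell$ exterior points, together with the edges along $\ell^*$ itself, should be organized so that each of the $\ell$ points of $C$ accounts for one edge to each of the $n-\ell$ exterior points after a nearest-point reduction.

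The hard part will be making the counting genuinely yield the product $\ell(n-\ell)$ rather than just a sum, and ensuring no edge is counted twice across different reference points. I expect the cleanest correct argument is the following: for each exterior point $v$ and each $w \in C$, the segment $vw$ either is an edge or is blocked by the nearest point of $P$ to $v$ on that segment; replacing $w$ by that nearest point defines a map, and the obstruction is to show this map is injective when restricted appropriately so that the $\ell(n-\ell)$ pairs $(v,w)$ inject into visibility edges. I anticipate that the core lemma to prove is that the number of \emph{distinct} nearest-visible points obtained this way is at least $\ell(n-\ell)$, which reduces to showing that for fixed $v$ the nearest-point map from $C$ is injective onto a set of $\ell$ visibility edges emanating from $v$ — precisely the statement that the $\ell$ points of $C$ project to $\ell$ distinct visibility edges at $v$. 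Establishing that injectivity, by an ordering/convexity argument on the line $\ell^*$, is the crux.
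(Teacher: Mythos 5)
You end up circling the paper's own construction --- for each pair consisting of a point $w$ on the heavy line and a point $v$ off it, take the visibility edge incident to $v$ in the direction of $w$ --- but you stop short of proving it works: you explicitly defer the injectivity of this map as ``the crux'' and ``the hard part,'' so as written the proposal has a genuine gap exactly where the proof lives. The gap closes in two short steps, both of which the paper compresses into the single sentence ``Since $S$ is collinear and $w$ is not in $S$, no edge is counted twice.'' First, for a fixed exterior point $v$, the $\ell$ points of $C$ lie on a line not containing $v$, so they determine $\ell$ pairwise distinct directions at $v$; the charged edge is the first point of $P$ met in each such direction, and an edge incident to $v$ determines its direction, so the $\ell$ edges charged at $v$ are distinct. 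Second, for double counting across different exterior points: if an edge $vu$ with both endpoints off $\ell^*$ were charged from $v$ (toward some $w_1\in C$) and from $u$ (toward some $w_2\in C$), then $w_1$ and $w_2$ would both lie on the line $vu$, which meets $\ell^*$ in at most one point since $v\notin\ell^*$; hence $w_1=w_2$, but one charging puts $u$ strictly between $v$ and that point while the other puts $v$ strictly between $u$ and it, a contradiction. That is the entire proof, and it is the same proof as the paper's (with the roles of the two letters swapped).

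Two of your intermediate detours are also actually false or dead ends and should be cut: the claim that the point of $C$ nearest to $v$ is visible from $v$ is wrong in general (a point of $P\setminus C$ can lie inside that segment, and iterating ``first point in that direction'' terminates at a visible point of $P$ that need not belong to $C$), and the scheme of charging blocked pairs $(v,w)$ to their blockers does not obviously produce edges at all. Neither is needed once the direction-counting argument above is in place.
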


\begin{proof} Let $S$ be the set of $\ell$ collinear points in $P$.
  For each point $v\in S$ and for each point $w\in P \setminus S$, count
  the edge incident to $w$ in the direction of $v$.  Since $S$
  is collinear and $w$ is not in $S$, no edge is counted twice. Thus
  $E\geq |S|\cdot|P\setminus S|=\ell(n-\ell)$.
\end{proof}

\begin{proof}[Proof of Theorem~\ref{beckthmc}]
Assume $\ell$ is the size of the largest collinear subset of $P$. If $\ell \geq \frac{n}{49}$ then $E \geq \frac{1}{49}n(n-\ell)$ by Lemma~\ref{Extra} and thus $L > \frac{1}{98}n(n-\ell)$ by Lemma~\ref{kmprop}.
On the other hand, suppose $\ell \leq \frac{n}{49}$. Setting $\frac{\epsilon}{2} = \frac{\delta}{3}$ and  $c=67$ in Theorem~\ref{NumEdgesGeneral} gives $\epsilon \leq \frac{1}{49}$ and $\delta \geq \frac{1}{32.57}$.
So $I \geq \frac{1}{32.57}n^2 \geq \frac{1}{32.57}n(n-\ell)$ 
and thus $L >  \frac{1}{98}n(n-\ell)$ by Lemma~\ref{kmprop}. 
\end{proof}

A more direct approach similar to the methods used in the proof of Theorem~\ref{NumEdgesGeneral} can be shown to improve Theorem~\ref{beckthmc} slightly to yield $\frac{1}{93}n(n-\ell)$ lines. The details are omitted, but can be found in~\cite{thesis}.

Beck's Theorem is often stated as a bound on the number of lines with few points.
In his original paper Beck~\cite{Beck-Comb83} mentioned briefly in a footnote that Lemma~\ref{kmprop} implies the following. 

\begin{observation}[Beck] \label{beckobs}
If $P$ is not collinear, then at least half the lines determined by $P$ contain $3$ points or less.
\end{observation}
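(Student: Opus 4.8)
The plan is to recast the claim as a counting inequality among the numbers $s_i$ and then read it off directly from Lemma~\ref{kmprop}. Write $L=\sum_{i\geq2}s_i$ for the total number of lines and $I=\sum_{i\geq2}is_i$ for the total number of incidences. The lines containing $3$ points or less are counted by $s_2+s_3$, so the assertion that at least half the lines contain $3$ points or less is precisely $s_2+s_3\geq\frac12 L$, which (since $L=(s_2+s_3)+\sum_{i\geq4}s_i$) is equivalent to the clean inequality $s_2+s_3\geq\sum_{i\geq4}s_i$.

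First I would invoke Lemma~\ref{kmprop}, which gives $3L\geq 3+I$ whenever $P$ is not collinear. Substituting the definitions of $L$ and $I$ and collecting terms yields $\sum_{i\geq2}(3-i)s_i\geq 3$. The $s_3$ term has coefficient $0$ and the $i\geq4$ terms have coefficient $3-i<0$, so this rearranges to $s_2\geq 3+\sum_{i\geq4}(i-3)s_i$; that is, Melchior's Inequality reappears, as expected since Lemma~\ref{kmprop} was itself derived from it.

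The key step is then to discard the harmless constant $3$ and use the crude bound $i-3\geq1$, valid for every $i\geq4$. This gives $\sum_{i\geq4}(i-3)s_i\geq\sum_{i\geq4}s_i$, and hence $s_2\geq\sum_{i\geq4}s_i$. Adding the nonnegative quantity $s_3$ to the left-hand side only strengthens the estimate, so $s_2+s_3\geq\sum_{i\geq4}s_i$, which is exactly the reformulated target.

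There is essentially no serious obstacle here: once the statement is phrased as $s_2+s_3\geq\sum_{i\geq4}s_i$, the whole argument is a one-line consequence of Lemma~\ref{kmprop} together with the observation that each line on four or more points is ``paid for'' by an ordinary ($2$-point) line. The only points needing a little care are the bookkeeping that converts the geometric phrasing into the inequality on the $s_i$, and noticing that the bound we actually obtain, namely $s_2\geq\sum_{i\geq4}s_i$, is strictly stronger than what the observation demands.
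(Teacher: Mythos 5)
Your proof is correct and follows essentially the same route as the paper: both apply Lemma~\ref{kmprop} and compare coefficients of the $s_i$, you merely rearrange all the way back to Melchior's inequality and thereby record the slightly stronger fact $s_2\geq\sum_{i\geq4}s_i$ before adding $s_3$. No gaps.
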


\begin{proof} 
By Lemma~\ref{kmprop},
$$ 3s_2 + 3s_3 + 3 \sum_{i\geq 4} s_i > \sum_{i\geq 2} is_i \geq 2s_2
+ 2s_3 + 4\sum_{i\geq 4} s_i\enspace.$$
Thus
$$ 2(s_2 + s_3) > \sum_{i\geq 2} s_i\enspace,$$
as desired.
%
\end{proof}

\begin{corollary} Every set $P$ of $n$ points with at most $\ell$ collinear determines at least $\frac{1}{196}n(n-\ell)$ lines each with at most $3$ points.
\end{corollary}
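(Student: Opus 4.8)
The plan is simply to compose the two results just established: Theorem~\ref{beckthmc}, which lower-bounds the \emph{total} number of lines determined by $P$, and Observation~\ref{beckobs}, which guarantees that at least half of those lines carry at most three points. If $P$ is collinear then $\ell=n$ and the claimed bound $\frac{1}{196}n(n-\ell)=0$ holds trivially, so I may assume $P$ is non-collinear; this is precisely the hypothesis needed to invoke Observation~\ref{beckobs}.

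First I would apply Theorem~\ref{beckthmc} to obtain $L \geq \frac{1}{98}n(n-\ell)$, where $L=\sum_{i\geq 2}s_i$ denotes the total number of determined lines. As in the proof of Theorem~\ref{beckthmc}, one may take $\ell$ to be the size of the largest collinear subset of $P$, since $n(n-\ell)$ only decreases as $\ell$ grows, so the bound for the largest collinear subset implies the statement for any valid upper bound $\ell$. Next, Observation~\ref{beckobs} gives $2(s_2+s_3) > \sum_{i\geq 2}s_i = L$, that is, the number of lines with at most three points is at least $\tfrac12 L$. Combining the two inequalities yields
$$ s_2 + s_3 \;\geq\; \tfrac{1}{2} L \;\geq\; \frac{1}{196}\,n(n-\ell), $$
which is the claim.

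There is no genuine obstacle here: the argument is essentially a one-line composition of the preceding results, and the only points requiring any care are the degenerate collinear case (handled above) and the bookkeeping factor of two coming from Observation~\ref{beckobs}. It is worth remarking that the constant $\frac{1}{196}$ is inherited directly from the $\frac{1}{98}$ of Theorem~\ref{beckthmc}, so any improvement to the latter (such as the $\frac{1}{93}$ mentioned above) would immediately sharpen this corollary in the same proportion.
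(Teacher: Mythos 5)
Your proof is correct and is exactly the intended argument: the corollary follows by combining Theorem~\ref{beckthmc} with Observation~\ref{beckobs}, and your handling of the collinear degenerate case and the factor of two is fine. The paper leaves this composition implicit, so there is nothing further to compare.
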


\bibliography{myBibliography,myConferences}
\bibliographystyle{myNatbibStyle}
\end{document}